\newtheorem{theorem}{Theorem}[section]
\newtheorem{remark}[theorem]{Remark}
\newtheorem{problem}[theorem]{Problem}
\numberwithin{equation}{section}
\title{}
\begin{document}
\begin{center}
%%%%%%%%% title %%%%%%%%%%
{\Large\bf On the edge reconstruction of the characteristic and permanental polynomials of a simple graph}
\\[10pt]

%%%%%%%%%% authors %%%%%%%%%%
{Jingyuan\ Zhang \footnote{Email address: doriazhang@outlook.com.}}
{Xian'an \ Jin \footnote{Partially supported by NSFC Grant (12171402); Email address: xajin@xmu.edu.cn.}}
{Weigen \ Yan$^*$ \footnote{Corresponding author; Partially supported by NSFC Grant (12071180); Email address: weigenyan@263.net.}}
{Qinghai \ Liu \footnote{Partially supported by NSFC Grant (11871015); Email address: qliu@fzu.edu.cn.}}
\\[10pt]
%%%%%%%%%% affiliation %%%%%%%%%
\footnotesize{$^{1,2}$ School of Mathematical Sciences, Xiamen University, Xiamen 361005, China.}\\
\footnotesize{$^{3}$ School of Science, Jimei University, Xiamen 361021, China}\\
\footnotesize{$^{4}$ College of Mathematics and Computer Science, Fuzhou University, Fuzhou 350108, China.}
\end{center}
%%%%%%%%%% abstract %%%%%%%%%
\begin{abstract}
As a variant of the Ulam's vertex reconstruction conjecture and the Harary's edge reconstruction conjecture, Cvetkovi\'c and Schwenk posed
independently the following problem: Can the characteristic polynomial of a simple graph $G$ with vertex set $V$ be reconstructed from the
characteristic polynomials of all subgraphs in $\{G-v|v\in V\}$ for $|V|\geq 3$? This problem is still open.  A natural problem is: Can the
characteristic polynomial of a simple graph $G$ with edge set $E$  be reconstructed from the characteristic polynomials of all subgraphs in
$\{G-e|e\in E\}$? In this paper, we prove that if $|V|\neq |E|$, then the characteristic polynomial of $G$ can be reconstructed from the
characteristic polynomials of all subgraphs in $\{G-uv, G-u-v|uv\in E\}$, and the similar result holds for the permanental polynomial of $G$.
We also prove that the Laplacian (resp. signless Laplacian) characteristic polynomial of $G$ can be reconstructed
from the Laplacian (resp. signless Laplacian) characteristic polynomials of all subgraphs in $\{G-e|e\in E\}$ (resp. if $|V|\neq |E|$).
\iffalse
Let $G=(V,E)$ be a simple graph with vertex set $V=\{v_1,v_2,\ldots,v_n\}$ and edge set $E=\{e_1,e_2,\ldots,e_m\}$. Denote the adjacency
matrix and the vertex degree diagonal matrix of $G$ by $A=(a_{ij})_{n\times n}$ and $D=diag(d(v_1),d(v_2),\cdots,(v_n))$, where $a_{ij}=1$ if
$(v_i,v_j)\in E(G)$ and $a_{ij}=0$ otherwise, and $d(v_i)$ is the degree of the vertex $v_i$ in $G$. Set $f_1(G;x)=\det(xI-A),
f_2(G;x)=\det(xI-D+A),f_3(G;x)=\det(xI-D-A),f_4(G;x)={\rm per}(xI-A)$, where $\det(X)$ and ${\rm per}(X)$ denote the determinant and the
permanent of a square matrix $X$, respectively.

In this paper, we consider a variant of the Ulam's vertex reconstruction conjecture and the Harary's edge reconstruction conjecture, and
prove that,
\begin{equation*}
(m-n)f_1(G;x)+xf_1'(G;x)=\sum_{e\in E}f_1(G-e;x)+\sum_{v_sv_t\in E}f_1(G-v_s-v_t;x),
\end{equation*}
\begin{equation*}
(m-n)f_4(G;x)+xf_4'(G;x)=\sum_{e\in E}f_4(G-e;x)-\sum_{v_sv_t\in E}f_4(G-v_s-v_t;x),
\end{equation*}
and for $i=2,3$,
\begin{equation*}
(m-n)f_i(G;x)+xf_i'(G;x)=\sum\limits_{e\in E}f_i(G-e;x).
\end{equation*}
We find that, if $m\neq n$, $f_i(G;x)$ can be reconstructed from $\{f_i(G-e;x)|e\in E\}$ and $\{f_i(G-v_s-v_t;x)|v_sv_t\in E\}$ for $i=1,4$,
$f_i(G;x)$ can be reconstructed from $\{f_i(G-e;x)|e\in E\}$ for $i=2,3$. We also show that if $m=n$, then $f_2(G;x)$ can be reconstructed
from $\{f_2(G-e;x)|e\in E\}$.
\fi
\\%%%%%%%%%%%%%%%%%%%%%%%%%%%%%%%%%%%%%%%%
{\sl Keywords:}\quad Vertex reconstruction conjecture; Edge reconstruction conjecture; Characteristic polynomial; Laplacian characteristic polynomial.
\end{abstract}

\maketitle

%%%%%%%%%%%%%%%%%%%%%%%%%%%%%%%%%%%%%%%%%%
%%%%%%%%%%%%% Section 1
%%%%%%%%%%%%%%%%%%%%%%%%%%%%%%%%%%%%%%%%%%
\section{Introduction}
\hspace*{\parindent}
The famous Ulam's vertex reconstruction conjecture \cite{Ulam60} asserts that
\iffalse
if $G_1$ and $G_2$ are simple graphs with vertex sets $V(G_1)$ and $V(G_2)$ and there exists a bijection $\phi: V(G_1)\rightarrow V(G_2)$ such
that $G_1-v$ and $G_2-\phi(v)$ are isomorphic for each vertex $v$ of $G_1$, then $G_1$ and $G_2$ are isomorphic. That is,
\fi
each simple graph $G$ with at least three vertices can be uniquely reconstructed from its vertex deck $\{G-v|v\in V(G)\}$.
\iffalse
Although some reconstructible graphs \cite{HM69} are obtained, the Ulam's conjecture is still open.
If vertex sets $V(G_1)$ and $V(G_2)$ are replaced by the edge sets $E(G_1)$ and $E(G_2)$, respectively, in the definition above, we obtain the
Edge Reconstruction Conjecture.
\fi
Harary \cite{Har65,HM69} posed a similar conjecture (i.e., the edge reconstruction conjecture), which states that every simple graph $G$ with
edge set $E(G)$ can be reconstructed from its edge deck $\{G-e|e\in E(G)\}$ if $|E(G)|\geq 4$.
\iffalse
Lov\'asz \cite{Lov72} proved that the edge reconstruction conjecture holds for simple graphs with $n$ vertices and at least $\frac{n(n-1)}{4}$
edges. M\"{u}ller \cite{Mul77} improved the Lov\'asz's result and proved that the edge reconstruction conjecture holds for simple graphs with
$n$ vertices and more than $n\cdot\log_2n$ edges.
Although M\"uller's result implies that the edge reconstruction conjecture holds for almost all simple graphs,
\fi
Although many results for these two conjectures have been obtained
\cite{And82,For04,Godsil87,GM81,Hos22,KNWZ21,Lov72,Man76,Mul77,Ram81,Yuan82,Thom77}, they are still open.

At the XVIII International Scientific Colloquium in Ilmenau in 1973, Cvetkovi\'c posed a related problem as follows: Can the characteristic
polynomial $\sigma(G;x)$ of a simple graph $G$ with vertex set $V(G)$ be reconstructed from the characteristic polynomials of subgraphs in
$\{G-v|v\in V(G)\}$ for $|V(G)|\geq 3$? The same problem was independently posed by Schwenk \cite{Sch79}.

Note that $\sigma'(G;x)=\sum\limits_{v\in V(G)}\sigma(G-v;x)$. Hence the coefficients of $\sigma(G;x)$ except for the constant term can be
reconstructed from $\{\sigma(G-v;x)|v\in V(G)\}$. Gutman and Cvetkovi\'c \cite{GC75} prove that the constant term of $\sigma(G;x)$ can be
uniquely determined from $\{\sigma(G-v;x)|v\in V(G)\}$ for all trees $G$ except perhaps for trees having a 1-factor. No examples of non-unique
reconstruction of the characteristic polynomial of graphs are known. Under the assumption that the reconstruction of the characteristic
polynomial is not unique, Cvetkovi\'c \cite{Cvet00} described some properties of graphs $G$ such that the constant term of $\sigma(G;x)$ can
not be reconstructed from $\{\sigma(G-v;x)|v\in V(G)\}$. Hagos \cite{Hag00} proved that the characteristic polynomial of a graph $G$ is
reconstructible from the characteristic polynomials of its vertex-deleted subgraphs and their complements. For the reconstruction problem of
the characteristic polynomial of graphs, see for example the nice survey \cite{SS23} and papers \cite{BIK05,SS16}. Recently, we considered the
edge reconstruction problem of the six digraph polynomials \cite{ZJY23}.

We assume that $G=(V(G),E(G))$ is a simple graph with vertex set $V(G)=\{v_1,v_2,\ldots,\\v_n\}$ and edge set $E(G)=\{e_1,e_2,\ldots,e_m\}$.
The diagonal matrix of vertex degrees of $G$ is denoted by $D=diag(d_1,d_2,\ldots, d_n)$, where $d_i$ is the degree of the vertex $v_i$ in $G$.
The adjacency matrix of $G$ is an $n\times n$ matrix $A$ whose entries $a_{ij}$ are given by
\begin{equation*}
a_{ij}=\left\{
\begin{array}{ll}
1, & \mbox{if}\ v_i\ \mbox{and}\ v_j\ \mbox{are adjacent}; \\
0, & \mbox{otherwise}.
\end{array}
\right.
\end{equation*}
Then $D-A$ and $D+A$ are the Laplacian and the signless Laplacian matrices of $G$, respectively. The collection $\{G-v_sv_t,
G-v_s-v_t|v_sv_t\in E(G)\}$ of subgraphs of $G$ is called the edge-vertex deck of $G$.
%We call the collection $\{G-v_s-v_t|v_sv_t\in E(G)\}$ of 2-vertex-deleted subgraphs of $G$ the 2-vertex deck of $G$.
Set
\begin{align}
\sigma_1(G;x)&=\det(xI-A),\\
\sigma_2(G;x)&=\det(xI-D+A),\\
\sigma_3(G;x)&=\det(xI-D-A),\\
\sigma_4(G;x)&={\rm per}(xI-A),
\end{align}
where $\det(X)$ and ${\rm per}(X)$ denote the determinant and permanent of a square matrix $X$, respectively. Then $\sigma_1(G;x),
\sigma_2(G;x),\sigma_3(G;x)$ and $\sigma_4(G;x)$ are called the characteristic polynomial, Laplacian characteristic polynomial, signless
Laplacian characteristic polynomial, permanental polynomial of $G$, respectively.

A similar problem to the reconstruction of the characteristic polynomial of a graph $G$ from the characteristic polynomials of the deck of $G$
is the following:
\begin{problem}
For any $i=1,2,3,4$, can the graph polynomial $\sigma_i(G;x)$ of a graph $G$ be reconstructed from $\{\sigma_i(G-e;x)|e\in E(G)\}$?
\end{problem}

In the next section, we obtain two identities related to the determinants and the permanents, respectively. Using these two identities, in
Section 3,  we obtain the following results:
\begin{equation*}
(m-n)\sigma_1(G;x)+x\sigma_1'(G;x)=\sum_{v_sv_t\in E(G)}[\sigma_1(G-v_sv_t;x)+\sigma_1(G-v_s-v_t;x)],
\end{equation*}
\begin{equation*}
(m-n)\sigma_4(G;x)+x\sigma_4'(G;x)=\sum_{v_sv_t\in E(G)}[\sigma_4(G-v_sv_t;x)-\sigma_4(G-v_s-v_t;x)],
\end{equation*}
and for any $i=2,3$,
\begin{equation*}
(m-n)\sigma_i(G;x)+x\sigma_i'(G;x)=\sum_{e\in E(G)}\sigma_i(G-e;x).
\end{equation*}

In Section 4, we prove that, $\sigma_i(G;x)$ can be reconstructed from $\{\sigma_i(G-v_sv_t;x), \sigma_i(G-v_s-v_t;x)|v_sv_t\in E(G)\}$ for any
$i=1,4$ if $m\neq n$, $\sigma_2(G;x)$ can be reconstructed from $\{\sigma_2(G-e;x)|e\in E(G)\}$, and $\sigma_3(G;x)$ can be reconstructed from
$\{\sigma_3(G-e;x)|e\in E(G)\}$ if $m\neq n$.

\begin{remark}
Kiani and Mirzakhah \cite{KM15} used a different technique to obtain the results above on $\sigma_2(G;x)$ and $\sigma_3(G;x)$ for the mixed graphs.
\end{remark}

\section{Two identities}
Let $M$ be an $m\times n$ matrix. Given two sets $A=\{a_1,\ldots,a_i\}\subset\{1,2,\ldots,m\}$ and
$B=\{b_1,\ldots,b_j\}\subset\{1,2,\ldots,n\}$, let $M^A_B$ be a submatrix obtained from $M$ by deleting all the rows in $A$ and all the columns
in $B$. For convenience, we write $M^{a_1,\ldots,a_i}_{b_1,\ldots,b_j}$ instead of $M^{\{a_1,\ldots,a_i\}}_{\{b_1,\ldots,b_j\}}$.

Let $X=(x_{st})_{n\times n}$ be a symmetric matrix of order $n$ over the complex field. Hence $x_{st}=x_{ts}$ for any $1\leq s,t\leq n$. For
any $1\leq i, j\leq n$, define a symmetric matrix $X_{[ij]}=(x_{st}^{ij})_{n\times n}$, where
\begin{displaymath}
x_{st}^{ij} = \left\{ \begin{array}{ll}
x_{st}, & \textrm{if}\ (s,t)\neq (i,j)\ \textrm{and}\ (s,t)\neq (j,i); \\
0, & \textrm{otherwise}.
\end{array} \right.
\end{displaymath}
That is, $X_{[ij]}=X_{[ji]}$ is the symmetric matrix obtained from $X$ by replacing the $(i,j)$-entry $x_{ij}$ and the $(j,i)$-entry $x_{ji}$
with $0$. For example, if
$$X=\left(\begin{array}{ccc}
x_{11} & x_{12} & x_{13}\\
x_{12} & x_{22} & x_{23}\\
x_{13} & x_{23} & x_{33}
\end{array}
\right),
$$
then
$$X_{[12]}=X_{[21]}=\left(\begin{array}{ccc}
x_{11} & 0 & x_{13}\\
0 & x_{22} & x_{23}\\
x_{13} & x_{23} & x_{33}
\end{array}
\right),
X_{[33]}=\left(\begin{array}{ccc}
x_{11} & x_{12} & x_{13}\\
x_{12} & x_{22} & x_{23}\\
x_{13} & x_{23} & 0
\end{array}
\right).
$$
Obviously, if $x_{ij}=0$, then $X=X_{[ij]}=X_{[ji]}$. Now we can prove the following result which will play an important role in the proof of
the main results in this paper.

\begin{theorem}
Let $X=(x_{st})_{n\times n}$ be a symmetric matrix of order $n$ over the complex field and let $X_{[ij]}$ be defined as above. Then the
determinant of $X$ satisfies:
\begin{equation}
\frac{1}{2}(n^2-n)\det(X)=\sum_{1\leq i\leq j\leq n}\det(X_{[ij]})+\sum_{1\leq i<j\leq n}x_{ij}^2\det(X^{i,j}_{i,j}).
\end{equation}
\end{theorem}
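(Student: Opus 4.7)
The cleanest route is through the permutation expansion $\det(X) = \sum_{\pi} \mathrm{sgn}(\pi)\prod_{k=1}^n x_{k,\pi(k)}$, where $\pi$ ranges over permutations of $\{1,\dots,n\}$. Say that a permutation $\pi$ \emph{uses} the pair $\{i,j\}$ with $i\le j$ if the monomial $\prod_k x_{k,\pi(k)}$ involves an entry in position $(i,j)$ or $(j,i)$; equivalently, $\pi(i)=j$ or $\pi(j)=i$ (when $i<j$), or $\pi(i)=i$ (when $i=j$). A fixed point of $\pi$ uses one such pair (a diagonal singleton), a 2-cycle $(i\ j)$ uses one pair (since $(i,j)$ and $(j,i)$ give the same unordered pair), and an $\ell$-cycle with $\ell\ge 3$ uses $\ell$ distinct off-diagonal pairs. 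Counting by cycles, I would show that each $\pi$ uses exactly $n-t_2(\pi)$ pairs in total, where $t_2(\pi)$ denotes the number of 2-cycles of $\pi$.

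With that combinatorial lemma, the first step is to rewrite $\sum_{i\le j}\det(X_{[ij]})$. The definition of $X_{[ij]}$ means that $\det(X_{[ij]})$ is the permutation sum for $\det(X)$ restricted to those $\pi$ that do \emph{not} use $\{i,j\}$. Since there are $\binom{n+1}{2}$ choices of $\{i,j\}$ with $i\le j$, each $\pi$ appears in the outer sum with multiplicity $\binom{n+1}{2}-(n-t_2(\pi)) = \binom{n}{2}+t_2(\pi)$, giving
$$\sum_{1\le i\le j\le n}\det(X_{[ij]}) \;=\; \binom{n}{2}\det(X) \;+\; \sum_{\pi}\mathrm{sgn}(\pi)\,t_2(\pi)\prod_{k=1}^n x_{k,\pi(k)}.$$

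The second step is to show that the extra weighted sum above is cancelled by $\sum_{i<j} x_{ij}^2\det(X^{i,j}_{i,j})$. For fixed $i<j$ and a permutation $\sigma$ of $\{1,\dots,n\}\setminus\{i,j\}$, extending $\sigma$ by the transposition $(i\ j)$ produces a permutation $\pi$ with a distinguished 2-cycle, satisfying $\mathrm{sgn}(\pi)=-\mathrm{sgn}(\sigma)$ and $x_{ij}^2\prod_{k\ne i,j} x_{k,\sigma(k)} = \prod_k x_{k,\pi(k)}$ (using the symmetry $x_{ij}=x_{ji}$). As $i<j$ and $\sigma$ vary, each $\pi\in S_n$ is obtained once for every 2-cycle it carries, so
$$\sum_{1\le i<j\le n} x_{ij}^2\det(X^{i,j}_{i,j}) \;=\; -\sum_{\pi}\mathrm{sgn}(\pi)\,t_2(\pi)\prod_{k=1}^n x_{k,\pi(k)}.$$
Adding the two boxed identities makes the $t_2(\pi)$-weighted sums cancel, leaving $\binom{n}{2}\det(X) = \sum_{i\le j}\det(X_{[ij]}) + \sum_{i<j} x_{ij}^2\det(X^{i,j}_{i,j})$, which is the claim.

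\textbf{Main obstacle.} The crux is the combinatorial identity in the opening paragraph: the number of unordered pairs used by a permutation $\pi$ is $n - t_2(\pi)$, not $n$, because a 2-cycle contributes one pair rather than two. Once that count is justified by a case analysis on cycle lengths, the rest amounts to matching monomials and signs in the permutation expansion, where the symmetry $x_{ij}=x_{ji}$ is what allows the monomial attached to a 2-cycle $(i\ j)$ to be written as $x_{ij}^2$ and hence recognized as a summand of $x_{ij}^2\det(X^{i,j}_{i,j})$.
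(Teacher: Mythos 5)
Your proof is correct, but it goes by a genuinely different route than the paper's. The paper works locally: it first records the cofactor identities $\det(X_{[ss]})=\det(X)-x_{ss}\det(X^s_s)$ and $\det(X_{[ij]})=\det(X)-(-1)^{i+j}x_{ij}\det(X^i_j)-(-1)^{i+j}x_{ji}\det(X^j_i)-x_{ij}^2\det(X^{i,j}_{i,j})$, sums them over all $i\le j$, and then collapses the first-order minor terms using the Laplace expansion $\sum_{i,j}(-1)^{i+j}x_{ij}\det(X^i_j)=n\det(X)$. You instead do a single global double count in the permutation expansion: the key observation that a permutation $\pi$ touches exactly $n-t_2(\pi)$ of the $\binom{n+1}{2}$ symmetric positions (a $2$-cycle contributing one pair rather than two), together with the identification of $\sum_{i<j}x_{ij}^2\det(X^{i,j}_{i,j})$ as the signed generating function for pairs $(\pi,\,\text{distinguished }2\text{-cycle})$, which cancels the $t_2$-weighted correction. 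Both arguments are complete; your counting lemma and the bijection $(\{i,j\},\sigma)\leftrightarrow(\pi,C)$ check out, as does the sign bookkeeping for the principal minors. The paper's version is shorter if one takes the cofactor identities for granted, but your version makes two things more transparent: exactly where the symmetry $x_{ij}=x_{ji}$ is used (to write the $2$-cycle monomial as $x_{ij}^2$), and why the companion permanent identity (Theorem 2.2) differs only by the sign in front of $\sum_{i<j}x_{ij}^2\,\mathrm{per}(X^{i,j}_{i,j})$ --- deleting all signs from your argument turns the factor $\mathrm{sgn}(\pi)=-\mathrm{sgn}(\sigma)$ into $+1$, flipping that one term.
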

\begin{proof}
Note that, for any $1\leq s\leq n$ and $1\leq i\neq j\leq n$,
\begin{equation}
\det(X_{[ss]})=\det(X)-x_{ss}\det(X^s_s),
\end{equation}
\begin{equation}
\det(X_{[ij]})=\det(X)-(-1)^{i+j}x_{ij}\det(X^i_j)-(-1)^{i+j}x_{ji}\det(X^j_i)-x_{ij}^2\det(X^{i,j}_{i,j}).
\end{equation}
%for any $i,j\in\{1,2,\ldots,n\}$ and $i\neq j$.

By Eqs. (2.2) and (2.3),
\begin{align}
&\sum_{1\leq i\leq j\leq n}\det(X_{[ij]})\nonumber\\
&=\sum_{i=1}^n\det(X_{[ii]})+\sum_{1\leq i<j\leq n}\det(X_{[ij]})\nonumber\\
&=\sum_{i=1}^n\det(X)-\sum_{i=1}^nx_{ii}\det(X^i_i)+\sum_{1\leq i<j\leq n}\det(X)-\sum_{\substack{1\leq i,j\leq n\\i\neq
j}}(-1)^{i+j}x_{ij}\det(X^i_j)\nonumber\\
&\ \ \ \ -\sum_{1\leq i<j\leq n}x_{ij}^2\det(X^{i,j}_{i,j})\nonumber
\end{align}
\begin{align}
&=n\cdot\det(X)-\sum_{i=1}^nx_{ii}\det(X^i_i)+\frac{n^2-n}{2}\cdot\det(X)-\sum_{\substack{1\leq i,j\leq n\\i\neq
j}}(-1)^{i+j}x_{ij}\det(X^i_j)\nonumber\\
&\ \ \ \ -\sum_{1\leq i<j\leq n}x_{ij}^2\det(X^{i,j}_{i,j})\nonumber\\
&=\frac{n^2+n}{2}\det(X)-\sum_{i=1}^n\sum_{j=1}^nx_{ij}(-1)^{i+j}\det(X_j^i)-\sum_{1\leq i<j\leq n}x_{ij}^2\det(X^{i,j}_{i,j})\nonumber\\
&=\frac{n^2+n}{2}\det(X)-n\cdot\det(X)-\sum_{1\leq i<j\leq n}x_{ij}^2\det(X^{i,j}_{i,j})\nonumber\\
&=\frac{n^2-n}{2}\det(X)-\sum_{1\leq i<j\leq n}x_{ij}^2\det(X^{i,j}_{i,j}).
\end{align}
Hence the theorem holds.
\end{proof}

Note that the permanent of a matrix $X=(x_{ij})_{n\times n}$ is defined as
\begin{equation*}
{\rm per}(X)=\sum_{\alpha\in S_n}x_{1\alpha(1)}x_{2\alpha(2)}\ldots x_{n\alpha(n)},
\end{equation*}
where $\alpha$ ranges over the set of the symmetric group of order $n$. Similarly, we obtain the following result.

\begin{theorem}
Let $X=(x_{st})_{n\times n}$ be a symmetric matrix of order $n$ over the complex field and let $X_{[ij]}$ be defined as above.  Then the
permanent ${\rm per}(X)$ of $X$ satisfies:
\begin{equation}
\frac{1}{2}(n^2-n){\rm per}(X)=\sum_{1\leq i\leq j\leq n}{\rm per}(X_{[ij]})-\sum_{1\leq i<j\leq n}x_{ij}^2{\rm per}(X^{i,j}_{i,j}).
\end{equation}
\end{theorem}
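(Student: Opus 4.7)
The plan is to mimic the proof of Theorem 2.1 with signless expansions, since the permanent lacks the alternating signs that drive the determinant calculation. The main identity to establish is the permanent analog of Eq. (2.3): for $1\leq i\neq j\leq n$,
\begin{equation*}
{\rm per}(X_{[ij]})={\rm per}(X)-x_{ij}\,{\rm per}(X^i_j)-x_{ji}\,{\rm per}(X^j_i)+x_{ij}^2\,{\rm per}(X^{i,j}_{i,j}),
\end{equation*}
together with the diagonal analog of Eq. (2.2), ${\rm per}(X_{[ss]})={\rm per}(X)-x_{ss}\,{\rm per}(X^s_s)$. To derive the first identity, I would temporarily regard $x_{ij}$ and $x_{ji}$ as independent formal variables $y$ and $z$, so that ${\rm per}(X)=A+By+Cz+Dyz$ with $A={\rm per}(X_{[ij]})$ and $D={\rm per}(X^{i,j}_{i,j})$; then express $B$ and $C$ via the row expansion ${\rm per}(X)=\sum_k x_{ik}{\rm per}(X^i_k)$, and finally specialize $y=z=x_{ij}$. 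The crucial sign difference from the determinant case will emerge here: in the permanent the permutations with $\alpha(i)=j,\alpha(j)=i$ contribute $+x_{ij}^2\,{\rm per}(X^{i,j}_{i,j})$ (no transposition sign $-1$), which is exactly what produces the minus sign on $\sum x_{ij}^2{\rm per}(X^{i,j}_{i,j})$ on the right-hand side of (2.5) after one rearranges.

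Once these two local identities are in hand, I would compute $\sum_{1\leq i\leq j\leq n}{\rm per}(X_{[ij]})$ exactly as in the proof of Theorem 2.1:
\begin{align*}
\sum_{1\leq i\leq j\leq n}{\rm per}(X_{[ij]})
&=\sum_{s=1}^n{\rm per}(X_{[ss]})+\sum_{1\leq i<j\leq n}{\rm per}(X_{[ij]})\\
&=\frac{n^2+n}{2}{\rm per}(X)-\sum_{1\leq i,j\leq n}x_{ij}\,{\rm per}(X^i_j)+\sum_{1\leq i<j\leq n}x_{ij}^2\,{\rm per}(X^{i,j}_{i,j}),
\end{align*}
where the off-diagonal $(i,j)$ and $(j,i)$ terms combine into the unrestricted double sum $\sum_{i,j} x_{ij}{\rm per}(X^i_j)$.

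The last step uses the ordinary row-expansion of the permanent: for each fixed row $i$, $\sum_{j}x_{ij}\,{\rm per}(X^i_j)={\rm per}(X)$, so the double sum equals $n\cdot{\rm per}(X)$. Substituting,
\begin{equation*}
\sum_{1\leq i\leq j\leq n}{\rm per}(X_{[ij]})=\frac{n^2-n}{2}{\rm per}(X)+\sum_{1\leq i<j\leq n}x_{ij}^2\,{\rm per}(X^{i,j}_{i,j}),
\end{equation*}
which is (2.5) after rearrangement.

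The only genuine obstacle is verifying the sign of the quadratic term in the local identity; everything else is a signless transcription of the determinant argument. Careful bookkeeping of which permutations contribute to the coefficient of $x_{ij}x_{ji}$ — and the fact that the absence of transposition signs flips the contribution from $-$ (determinant) to $+$ (permanent) — is the essential observation that accounts for the minus sign in front of $\sum x_{ij}^2\,{\rm per}(X^{i,j}_{i,j})$ in (2.5).
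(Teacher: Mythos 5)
Your proposal is correct and follows essentially the same route as the paper's proof: establish the local expansions ${\rm per}(X_{[ss]})={\rm per}(X)-x_{ss}{\rm per}(X^s_s)$ and ${\rm per}(X_{[ij]})={\rm per}(X)-x_{ij}{\rm per}(X^i_j)-x_{ji}{\rm per}(X^j_i)+x_{ij}^2{\rm per}(X^{i,j}_{i,j})$, sum over all pairs, and collapse the double sum via the signless row expansion $\sum_j x_{ij}{\rm per}(X^i_j)={\rm per}(X)$. Your polarization argument (treating $x_{ij},x_{ji}$ as independent variables) is a valid justification of the local identity that the paper simply asserts, and your sign bookkeeping for the quadratic term is right.
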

\begin{proof}
Note that
\begin{align}
&{\rm per}(X_{[ii]})={\rm per}(X)-x_{ii}{\rm per}(X^i_i),\\
&{\rm per}(X_{[ij]})={\rm per}(X)-x_{ij}{\rm per}(X^i_j)-x_{ji}{\rm per}(X^j_i)+x_{ij}^2{\rm per}(X^{i,j}_{i,j}),
\end{align}
for any $i,j\in\{1,2,\ldots,n\}$ and $i\neq j$.

By Eqs. (2.6) and (2.7),
\begin{align}
&\sum_{1\leq i\leq j\leq n}{\rm per}(X_{[ij]})\nonumber\\
&=\sum_{i=1}^n{\rm per}(X_{[ii]})+\sum_{1\leq i<j\leq n}{\rm per}(X_{[ij]})\nonumber\\
&=\sum_{i=1}^n{\rm per}(X)-\sum_{i=1}^nx_{ii}{\rm per}(X^i_i)+\sum_{1\leq i<j\leq n}{\rm per}(X)-\sum_{\substack{1\leq i,j\leq n\\i\neq
j}}x_{ij}{\rm per}(X^i_j)+\sum_{1\leq i<j\leq n}x_{ij}^2{\rm per}(X^{i,j}_{i,j})\nonumber\\
&=n\cdot{\rm per}(X)-\sum_{i=1}^nx_{ii}{\rm per}(X^i_i)+\frac{n^2-n}{2}\cdot{\rm per}(X)-\sum_{\substack{1\leq i,j\leq n\\i\neq j}}x_{ij}{\rm
per}(X^i_j)+\sum_{1\leq i<j\leq n}x_{ij}^2{\rm per}(X^{i,j}_{i,j})\nonumber\\
&=(n+\frac{n^2-n}{2}){\rm per}(X)-n\cdot{\rm per}(X)+\sum_{1\leq i<j\leq n}x_{ij}^2{\rm per}(X^{i,j}_{i,j})\nonumber\\
&=\frac{1}{2}(n^2-n){\rm per}(X)+\sum_{1\leq i<j\leq n}x_{ij}^2{\rm per}(X^{i,j}_{i,j}).
\end{align}
Hence the theorem holds.
\end{proof}

Let $2m$ be the number of non-zero non-diagonal entries of $X$, and let $2k$ be the number of zeros of non-diagonal entries in $X$. Then
$2m=n^2-n-2k$. The following result is equivalent to Theorem 2.1 above.

\begin{theorem}
Given a symmetric matrix $X=(x_{st})_{n\times n}$ of order $n$ over the complex field, let $2m$ be the number of non-zero non-diagonal entries
of $X$ and let $c$ be the number of zeros of diagonal entries in $X$. Then
\begin{equation}
(m-c)\det(X)=\sum_{(i,j)\in I_1}\det(X_{[ij]})+\sum_{(i,j)\in I_2}x_{ij}^2\det(X^{i,j}_{i,j}),
\end{equation}
where $I_1=\{(i,j)|x_{ij}\neq 0,\ 1\leq i\leq j\leq n\}$ and $I_2=\{(i,j)|x_{ij}\neq 0,\ 1\leq i<j\leq n\}$.
\end{theorem}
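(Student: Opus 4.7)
The plan is to deduce Theorem 2.3 as an immediate reindexing of Theorem 2.1 by isolating the terms in which the relevant entry $x_{ij}$ happens to vanish. The crucial observation recorded in the paragraph preceding Theorem 2.1 is that whenever $x_{ij}=0$ the operation of zeroing out the $(i,j)$- and $(j,i)$-entries does nothing, so $X_{[ij]}=X$ and $\det(X_{[ij]})=\det(X)$. Simultaneously, such a pair contributes $0$ to the quadratic sum because $x_{ij}^2=0$. Therefore Theorem 2.1 already contains Theorem 2.3; the only work is to strip away the trivial terms and tally the resulting multiplicity of $\det(X)$.

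First I would split the double sum in Theorem 2.1 according to whether $x_{ij}\neq 0$:
\begin{equation*}
\sum_{1\leq i\leq j\leq n}\det(X_{[ij]})=\sum_{(i,j)\in I_1}\det(X_{[ij]})+N\cdot \det(X),
\end{equation*}
where $N$ counts the pairs $(i,j)$ with $1\leq i\leq j\leq n$ and $x_{ij}=0$. The second sum in Theorem 2.1 may be restricted to $I_2$ at no cost since its zero-entry summands vanish automatically.

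Next I would evaluate $N$. The index set $\{(i,j):1\leq i\leq j\leq n\}$ has cardinality $\binom{n+1}{2}$. Of its diagonal pairs $(i,i)$, exactly $n-c$ have $x_{ii}\neq 0$ (by the definition of $c$). Of its off-diagonal pairs $(i,j)$ with $i<j$, exactly $m$ have $x_{ij}\neq 0$, because the $2m$ non-zero off-diagonal entries of the symmetric matrix $X$ come in $m$ symmetric pairs. Consequently
\begin{equation*}
N=\binom{n+1}{2}-(n-c)-m=\frac{n(n-1)}{2}+c-m.
\end{equation*}

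Finally I would substitute these two facts into the identity of Theorem 2.1 to obtain
\begin{equation*}
\frac{n(n-1)}{2}\det(X)=\sum_{(i,j)\in I_1}\det(X_{[ij]})+\left(\frac{n(n-1)}{2}+c-m\right)\det(X)+\sum_{(i,j)\in I_2}x_{ij}^2\det(X^{i,j}_{i,j}),
\end{equation*}
and cancel the common term $\tfrac{n(n-1)}{2}\det(X)$ from both sides; the remaining coefficient of $\det(X)$ on the left is $m-c$, which is exactly the claimed identity. There is no genuine obstacle here beyond careful combinatorial bookkeeping, in particular the distinction between diagonal zeros (counted by $c$) and off-diagonal zeros (implicit through the count $m$ of non-zero symmetric pairs); the hardest step is merely verifying that the arithmetic of $N$ is correct.
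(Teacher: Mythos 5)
Your proposal is correct and follows essentially the same route as the paper: the paper likewise derives Theorem 2.3 from Theorem 2.1 by observing that each pair $(i,j)$ with $x_{ij}=0$ contributes $\det(X_{[ij]})=\det(X)$ and nothing to the quadratic sum, the discarded terms amounting to $(c+k)\det(X)$ with $k=\tfrac{n^2-n}{2}-m$, which matches your count $N$. The only difference is that you spell out the combinatorial bookkeeping that the paper leaves implicit.
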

\begin{proof}
By Theorem 2.1,
\begin{align}
\frac{1}{2}(n^2-n)\det(X)&=\sum_{1\leq i\leq j\leq n}\det(X_{[ij]})+\sum_{1\leq i<j\leq n}x_{ij}^2\det(X^{i,j}_{i,j})\nonumber\\
&=\sum_{(i,j)\in I_1}\det(X_{[ij]})+\sum_{(i,j)\in I_2}x_{ij}^2\det(X^{i,j}_{i,j})+(c+
k)\det(X).
\end{align}
Hence the theorem holds.
\end{proof}

Similarly, by Theorem 2.2, we can prove the following result.

\begin{theorem}
Given a symmetric matrix $X=(x_{st})_{n\times n}$ of order $n$ over the complex field, let $2m$ be the number of non-zero non-diagonal entries
of $X$ and let $c$ be the number of zeros of diagonal entries in $X$. Then
\begin{equation}
(m-c){\rm per}(X)=\sum_{(i,j)\in I_1}{\rm per}(X_{[ij]})-\sum_{(i,j)\in I_2}x_{ij}^2{\rm per}(X^{i,j}_{i,j}),
\end{equation}
where $I_1=\{(i,j)|x_{ij}\neq 0,\ 1\leq i\leq j\leq n\}$ and $I_2=\{(i,j)|x_{ij}\neq 0,\ 1\leq i<j\leq n\}$.
\end{theorem}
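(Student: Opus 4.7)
The plan is to derive Theorem 2.4 from Theorem 2.2 in exactly the same way that Theorem 2.3 is derived from Theorem 2.1. The key observation is that if $x_{ij}=0$, then $X_{[ij]}=X$ (nothing is being replaced) and $x_{ij}^2 \,\mathrm{per}(X^{i,j}_{i,j})=0$. So starting from the identity
\begin{equation*}
\frac{1}{2}(n^2-n)\,\mathrm{per}(X)=\sum_{1\leq i\leq j\leq n}\mathrm{per}(X_{[ij]})-\sum_{1\leq i<j\leq n}x_{ij}^2\,\mathrm{per}(X^{i,j}_{i,j}),
\end{equation*}
the entries where $x_{ij}=0$ contribute trivially on both sides.

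Concretely, I would first split the first sum on the right by the condition $x_{ij}\neq 0$ versus $x_{ij}=0$. The set of off-diagonal indices $(i,j)$ with $i<j$ and $x_{ij}=0$ has cardinality $k=\tfrac{n^2-n}{2}-m$ (since $X$ has $2m$ non-zero off-diagonal entries by hypothesis), and there are $c$ diagonal indices with $x_{ii}=0$. Each such zero index contributes one copy of $\mathrm{per}(X)$ to the sum, for a total contribution of $(c+k)\,\mathrm{per}(X)$. The remaining part of the sum is exactly $\sum_{(i,j)\in I_1}\mathrm{per}(X_{[ij]})$. The second sum likewise collapses to $\sum_{(i,j)\in I_2}x_{ij}^2\,\mathrm{per}(X^{i,j}_{i,j})$ since all other terms are zero.

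After this partition, I would transpose the $(c+k)\,\mathrm{per}(X)$ term to the left-hand side, obtaining the coefficient $\tfrac{n^2-n}{2}-c-k$. Using $k=\tfrac{n^2-n}{2}-m$ this simplifies to $m-c$, yielding
\begin{equation*}
(m-c)\,\mathrm{per}(X)=\sum_{(i,j)\in I_1}\mathrm{per}(X_{[ij]})-\sum_{(i,j)\in I_2}x_{ij}^2\,\mathrm{per}(X^{i,j}_{i,j}),
\end{equation*}
which is the desired identity.

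There is no real obstacle here; the proof is essentially a bookkeeping argument, and the only care required is to correctly count the zero off-diagonal pairs (with $i<j$ versus $i\le j$) and to keep track of the sign in front of the $x_{ij}^2$ sum, which is negative for the permanental version (as in Theorem 2.2) in contrast to the positive sign in the determinantal case (Theorem 2.1). Everything else is a direct specialization of Theorem 2.2 obtained by discarding terms that vanish under the hypothesis $x_{ij}=0$.
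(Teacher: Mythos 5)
Your proposal is correct and follows exactly the route the paper intends: Theorem 2.4 is obtained from Theorem 2.2 by isolating the indices with $x_{ij}=0$ (which contribute $(c+k)\,{\rm per}(X)$ with $k=\tfrac{n^2-n}{2}-m$) and moving that term to the left, precisely mirroring the paper's proof of Theorem 2.3. Your bookkeeping of $k$, $c$, and the sign of the $x_{ij}^2$ sum all check out.
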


\section{The generalized characteristic polynomial and generalized permanental polynomial }
Suppose that both $\beta$ and $\gamma$ are real numbers satisfying $\gamma\neq0$. Let $G$ be a simple graph with vertex set
$V(G)=\{v_1,v_2,\ldots,v_n\}$ and edge set $E(G)=\{e_1,e_2,\ldots,e_m\}$. Let $D$ and $A=(a_{ij})_{n\times n}$ denote the diagonal matrix of
vertex degrees and the adjacency matrix of $G$, respectively. The generalized characteristic polynomial and generalized permanental polynomial
of $G$ are defined as
\begin{equation}
\tau_1(G;x)=\det(xI_n-\beta D-\gamma A)
\end{equation}
and
\begin{equation}
\tau_2(G;x)={\rm per}(xI_n-\beta D-\gamma A),
\end{equation}
respectively.
\iffalse
\begin{align}
&\det(X_{[ss]})\nonumber\\
&=\det(X)-x_{ss}\det(X^s_s)\\
\nonumber
{\mbox{and}}&\\
\nonumber
&\det(X_{[ij]})\nonumber\\
%&=\det(X)-(-1)^{i+j}x_{ij}\det(X^i_j)-(-1)^{i+j}x_{ji}\det(X^j_i)-x_{ij}^2\det(X^{i,j}_{i,j})\nonumber\\
&=\det(X)-(-1)^{i+j}x_{ij}\det(X^i_j)-(-1)^{i+j}x_{ji}\det(X^j_i)-x_{ij}^2\det(X^{i,j}_{i,j}).
\end{align}
\fi
\begin{theorem}
Let $\tau_1(G;x)=\det(xI_n-\beta D-\gamma A)$ and $\tau_2(G;x)={\rm per}(xI_n-\beta D-\gamma A)$ be the generalized characteristic polynomial
and generalized permanental polynomial of a graph $G$ with vertex set $V(G)=\{v_1,v_2,\ldots,v_n\}$ and edge set $E(G)=\{e_1,e_2,\ldots,e_m\}$.
Then
\begin{align}
&(m-n)\tau_1(G;x)+x\tau_1'(G;x)\nonumber\\
&=\sum_{e\in E(G)}\tau_1(G-e;x)+(\gamma^2-\beta^2)\sum_{v_sv_t\in E(G)}\det\left[(xI_n-\beta D-\gamma A)^{s,t}_{s,t}\right]\\
\nonumber
{\mbox{and}}&\\
\nonumber
&(m-n)\tau_2(G;x)+x\tau_2'(G;x)\nonumber\\
&=\sum_{e\in E(G)}\tau_2(G-e;x)-(\gamma^2+\beta^2)\sum_{v_sv_t\in E(G)}{\rm per}\left[(xI_n-\beta D-\gamma A)^{s,t}_{s,t}\right].
\end{align}
\end{theorem}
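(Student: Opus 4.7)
The plan is to apply Theorem~2.3 to the symmetric matrix $X = xI_n - \beta D - \gamma A$, viewing the identity as a polynomial equation in $x$. Since $\gamma\neq 0$, the nonzero off-diagonal entries of $X$ are precisely $x_{ij}=-\gamma$ for each edge $v_iv_j\in E(G)$, and for $x\notin\{\beta d_1,\dots,\beta d_n\}$ all diagonal entries are nonzero. Thus in the notation of Theorem~2.3 we may take $m=|E(G)|$, $c=0$, $I_1 = \{(i,i) : 1\le i\le n\}\cup\{(i,j) : v_iv_j\in E,\ i<j\}$, $I_2 = \{(i,j) : v_iv_j\in E,\ i<j\}$, and $x_{ij}^2=\gamma^2$ at every edge, giving
\begin{equation*}
m\,\tau_1(G;x) = \sum_{s=1}^n \det(X_{[ss]}) + \sum_{v_sv_t\in E(G)} \det(X_{[st]}) + \gamma^2 \sum_{v_sv_t\in E(G)} \det(X^{s,t}_{s,t}).
\end{equation*}
Once the desired identity is verified outside this finite exceptional set, it extends to all $x$ by polynomial continuation.

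Next I would rewrite the two $\det(X_{[\cdot]})$-sums in graph-theoretic terms. For the diagonal, Eq.~(2.2) together with the standard derivative formula $\tau_1'(G;x)=\sum_s \det(X^s_s)$ gives
\begin{equation*}
\sum_{s=1}^n \det(X_{[ss]}) = n\tau_1(G;x) - x\tau_1'(G;x) + \beta \sum_{s=1}^n d_s\det(X^s_s).
\end{equation*}
For the edge terms, I would observe that deleting $v_sv_t$ has the matrix effect of zeroing the $(s,t)$- and $(t,s)$-entries of $X$ \emph{and} raising the $(s,s)$- and $(t,t)$-entries by $\beta$, so
\begin{equation*}
\tau_1(G-v_sv_t;x) = \det\bigl(X_{[st]} + \beta E_{ss} + \beta E_{tt}\bigr),
\end{equation*}
where $E_{pp}$ denotes the elementary matrix with a $1$ in position $(p,p)$ and zeros elsewhere. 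A two-step multilinearity expansion in rows $s$ and $t$, together with the observation that $(X_{[st]})^s_s=X^s_s$, $(X_{[st]})^t_t=X^t_t$, and $(X_{[st]})^{s,t}_{s,t}=X^{s,t}_{s,t}$ (the modified entries lie in the deleted rows/columns), yields
\begin{equation*}
\det(X_{[st]}) = \tau_1(G-v_sv_t;x) - \beta\det(X^s_s) - \beta\det(X^t_t) - \beta^2\det(X^{s,t}_{s,t}).
\end{equation*}

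Substituting the two rewritings into the displayed consequence of Theorem~2.3 and using the handshake identity $\sum_{v_sv_t\in E(G)}[\det(X^s_s)+\det(X^t_t)] = \sum_s d_s\det(X^s_s)$, the $\beta$-contributions from the diagonal and edge sums cancel exactly, while the $-\beta^2$ correction combines with the $+\gamma^2$ of Theorem~2.3 to leave the coefficient $\gamma^2-\beta^2$. Rearranging gives the first identity of Theorem~3.1. The permanental identity is proved by the same scheme but with Theorem~2.4 in place of Theorem~2.3 and Eqs.~(2.6)--(2.7) in place of Eqs.~(2.2)--(2.3); the only change is that the $x_{ij}^2$-sum in Theorem~2.4 carries a minus sign, so the analogous $\beta^2$ and $\gamma^2$ contributions \emph{add} rather than cancel, producing the coefficient $-(\gamma^2+\beta^2)$.

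The main bookkeeping obstacle is tracking the diagonal-correction term $\beta\sum_s d_s\det(X^s_s)$: it appears once from the $\det(X_{[ss]})$-sum via Eq.~(2.2) and a second time from rewriting each $\det(X_{[st]})$ via the multilinear expansion, and one must confirm through the handshake identity that these two contributions cancel precisely, so that the clean coefficient $\gamma^2-\beta^2$ (respectively $-(\gamma^2+\beta^2)$) emerges on the right-hand side. No new idea is required beyond careful multilinear bookkeeping.
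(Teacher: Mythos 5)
Your proposal is correct and follows essentially the same route as the paper: apply Theorem 2.3 (resp.\ 2.4) to $X=xI_n-\beta D-\gamma A$, rewrite the diagonal terms via Eq.\ (2.2) and the derivative formula, convert each $\det(X_{[st]})$ to $\tau_1(G-e;x)$ by a multilinear expansion accounting for the degree change at $s$ and $t$, and cancel the $\beta$-terms with the handshake identity. Your explicit handling of the possible zero diagonal entries at $x=\beta d_i$ via polynomial continuation is a small point of rigor the paper passes over silently, but it does not change the argument.
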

\begin{proof}
Note that $G$ has $m$ edges. Hence $xI_n-\beta D-\gamma A$ has $2m$ non-zero non-diagonal entries and contains no zeros of diagonal entries. By
Theorems 2.3 and $2.4$,
\begin{align}
&m\cdot\det(xI_n-\beta D-\gamma A)\nonumber\\
&=\sum_{i=1}^n \det(xI_n^{(i)}-\beta D^{(i)}-\gamma A)+\sum_{e\in E(G)}\det(xI_n-\beta D-\gamma A_e)\nonumber\\
&\ \ \ \ +\sum_{v_sv_t\in E(G)}(\gamma a_{st})^2\det[(xI_n-\beta D-\gamma A)^{s,t}_{s,t}],
&\\
\nonumber
&m\cdot{\rm per}(xI_n-\beta D-\gamma A)\nonumber\\
&=\sum_{i=1}^n {\rm per}(xI_n^{(i)}-\beta D^{(i)}-\gamma A)+\sum_{e\in E(G)}{\rm per}(xI_n-\beta D-\gamma A_e)\nonumber\\
&\ \ \ \ -\sum_{v_sv_t\in E(G)}(\gamma a_{st})^2{\rm per}[(xI_n-\beta D-\gamma A)^{s,t}_{s,t}],
\end{align}
where $A_e$ is the adjacency matrix of the graph $G-e$, and $I_n^{(i)}$ is the diagonal matrix of order $n$ with diagonal entries equal to one
except for the $i$-th entry equal to zero, and $D^{(i)}=diag(d_1,\ldots,d_{i-1},0,d_{i+1},\ldots,d_n)$.

Denote the diagonal matrix of vertex degrees of $G-e$ by $D_e$. Without loss of generality, let $e=v_sv_t\in E(G)$. Then
\begin{align*}
&\det(xI_n-\beta D-\gamma A_e)\nonumber\\
&=\det(xI_n-\beta D_e-\gamma A_e)-\beta\det[(xI_n-\beta D-\gamma A)^s_s]-\beta\det[(xI_n-\beta D-\gamma A)^t_t]\nonumber\\
&\ \ \ -\beta^2\det[(xI_n-\beta D-\gamma A)^{s,t}_{s,t}],
&\\
\nonumber\\
&{\rm per}(xI_n-\beta D-\gamma A_e)\nonumber\\
&={\rm per}(xI_n-\beta D_e-\gamma A_e)-\beta{\rm per}[(xI_n-\beta D-\gamma A)^s_s]-\beta{\rm per}[(xI_n-\beta D-\gamma A)^t_t]\nonumber\\
&\ \ \ -\beta^2{\rm per}[(xI_n-\beta D-\gamma A)^{s,t}_{s,t}].
\end{align*}
On the other hand,
\begin{align*}
&\det(xI_n^{(i)}-\beta D^{(i)}-\gamma A)=\det(xI_n-\beta D-\gamma A)-(x-\beta d_i)\det[(xI_n-\beta D-\gamma A)^i_i],\\
&{\rm per}(xI_n^{(i)}-\beta D^{(i)}-\gamma A)={\rm per}(xI_n-\beta D-\gamma A)-(x-\beta d_i){\rm per}[(xI_n-\beta D-\gamma A)^i_i].
\end{align*}
Hence
\begin{align}
&\sum_{e\in E(G)}\det(xI_n-\beta D-\gamma A_e)\nonumber\\
&=\sum_{e\in E(G)}\tau_1(G-e;x)-\beta\sum_{v_sv_t\in E(G)}\left[\det[(xI_n-\beta D-\gamma A)^s_s]+\det[(xI_n-\beta D-\gamma
A)^t_t]\right]\nonumber\\
&\ \ \ \ -\beta^2\sum_{v_sv_t\in E(G)}\det[(xI_n-\beta D-\gamma A)^{s,t}_{s,t}],
&\\
\nonumber\\
&\sum_{e\in E(G)}{\rm per}(xI_n-\beta D-\gamma A_e)\nonumber\\
&=\sum_{e\in E(G)}\tau_2(G-e;x)-\beta\sum_{v_sv_t\in E(G)}\left[{\rm per}[(xI_n-\beta D-\gamma A)^s_s]+{\rm per}[(xI_n-\beta D-\gamma
A)^t_t]\right]\nonumber\\
&\ \ \ -\beta^2\sum_{v_sv_t\in E(G)}{\rm per}[(xI_n-\beta D-\gamma A)^{s,t}_{s,t}],
&\\
\nonumber\\
&\sum_{i=1}^n\det(xI_n^{(i)}-\beta D^{(i)}-\gamma A)\nonumber\\
&=\sum_{i=1}^n\left[\det(xI_n-\beta D-\gamma A)-(x-\beta d_i)\det[(xI_n-\beta D-\gamma A)^i_i]\right]\nonumber\\
&=n\tau_1(G;x)-x\sum_{i=1}^n\det[(xI_n-\beta D-\gamma A)^i_i]+\beta\sum_{i=1}^nd_i\det[(xI_n-\beta D-\gamma A)_i^i],
&\\
\nonumber\\
&\sum_{i=1}^n{\rm per}(xI_n^{(i)}-\beta D^{(i)}-\gamma A)\nonumber\\
&=\sum_{i=1}^n\left[{\rm per}(xI_n-\beta D-\gamma A)-(x-\beta d_i){\rm per}[(xI_n-\beta D-\gamma A)^i_i]\right]\nonumber\\
&=n\tau_2(G;x)-x\sum_{i=1}^n{\rm per}[(xI_n-\beta D-\gamma A)^i_i]+\beta\sum_{i=1}^nd_i{\rm per}[(xI_n-\beta D-\gamma A)^i_i].
\end{align}

It is not difficult to show that
\begin{align}
&\sum_{i=1}^n\det[(xI_n-\beta D-\gamma A)^i_i]=\tau_1'(G;x),\\
&\sum_{i=1}^n{\rm per}[(xI_n-\beta D-\gamma A)^i_i]=\tau_2'(G;x).
\end{align}
By Eqs. (3.5)-(3.12),
\begin{align}
m\cdot\det(xI_n-\beta D-\gamma A)=&n\tau_1(G;x)-x\tau_1'(G;x)+\sum_{e\in E(G)}\tau_1(G-e;x)\nonumber\\
&+\sum_{v_sv_t\in E(G)}(\gamma^2a_{st}^2-\beta^2)\det[(xI_n-\beta D-\gamma A)^{s,t}_{s,t}],
\end{align}
\begin{align}
m\cdot{\rm per}(xI_n-\beta D-\gamma A)=&n\tau_2(G;x)-x\tau_2'(G;x)+\sum_{e\in E(G)}\tau_2(G-e;x)\nonumber\\
&-\sum_{v_sv_t\in E(G)}(\gamma^2a_{st}^2+\beta^2){\rm per}[(xI_n-\beta D-\gamma A)^{s,t}_{s,t}].
\end{align}
Thus the theorem holds.
\end{proof}

\begin{remark}
Similarly, for any edge-weighted graph $G$ with edge-weight function $\omega:E(G)\rightarrow\mathbb{R}\setminus\{0\}$, we have
\begin{align*}
&(m-n)\tau_1(G;x)+x\tau_1'(G;x)\nonumber\\
&=\sum_{e\in E(G)}\tau_1(G-e;x)+\sum_{v_sv_t\in E(G)}(\gamma^2\omega(v_sv_t)^2-\beta^2)\det\left[(xI_n-\beta D-\gamma A)^{s,t}_{s,t}\right].\\
\nonumber
{\mbox{and}}&\\
\nonumber
&(m-n)\tau_2(G;x)+x\tau_2'(G;x)\nonumber\\
&=\sum_{e\in E(G)}\tau_2(G-e;x)-\sum_{v_sv_t\in E(G)}(\gamma^2\omega(v_sv_t)^2+\beta^2){\rm per}\left[(xI_n-\beta D-\gamma
A)^{s,t}_{s,t}\right].
\end{align*}
\end{remark}

\begin{theorem}
Let $G$ be a simple graph with vertex set $V(G)=\{v_1,v_2,\ldots,v_n\}$ and edge set $E(G)=\{e_1,e_2,\ldots,e_m\}$. Then the graph polynomials
$\sigma_1(G;x)$ and $\sigma_4(G;x)$ defined in Section 1 satisfy:
\begin{equation}
(m-n)\sigma_1(G;x)+x\sigma_1'(G;x)=\sum_{v_sv_t\in E(G)}[\sigma_1(G-v_sv_t;x)+\sigma_1(G-v_s-v_t;x)],
\end{equation}
\begin{equation}
(m-n)\sigma_4(G;x)+x\sigma_4'(G;x)=\sum_{v_sv_t\in E(G)}[\sigma_4(G-v_sv_t;x)-\sigma_4(G-v_s-v_t;x)].
\end{equation}
\end{theorem}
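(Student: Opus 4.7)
The plan is to obtain both identities as immediate specializations of Theorem 3.1 at $\beta = 0$, $\gamma = 1$. Under this substitution the generalized polynomials collapse to the classical ones: $\tau_1(G;x) = \det(xI_n - A) = \sigma_1(G;x)$ and $\tau_2(G;x) = {\rm per}(xI_n - A) = \sigma_4(G;x)$, and since the $\beta D$ term vanishes identically we also have $\tau_1(G-e;x) = \sigma_1(G-e;x)$ and $\tau_2(G-e;x) = \sigma_4(G-e;x)$ for every edge $e$. Hence the edge-deletion sums on the right-hand side of (3.3) and (3.4) transcribe directly into $\sum_{v_sv_t\in E(G)}\sigma_1(G-v_sv_t;x)$ and $\sum_{v_sv_t\in E(G)}\sigma_4(G-v_sv_t;x)$, respectively.

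Next I would identify the remaining two-vertex cofactor terms with induced-subgraph polynomials. Deleting the $s$-th and $t$-th rows and columns from $xI_n - A$ leaves $xI_{n-2} - A'$, where $A'$ is the adjacency matrix of $G - v_s - v_t$, so
\begin{align*}
\det\bigl[(xI_n - A)^{s,t}_{s,t}\bigr] &= \sigma_1(G - v_s - v_t;x),\\
{\rm per}\bigl[(xI_n - A)^{s,t}_{s,t}\bigr] &= \sigma_4(G - v_s - v_t;x).
\end{align*}
Combining this with the prefactors $\gamma^2 - \beta^2 = 1$ in (3.3), which yields the plus sign in (3.15), and $-(\gamma^2 + \beta^2) = -1$ in (3.4), which yields the minus sign in (3.16), finishes the derivation.

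There is essentially no obstacle here: once Theorem 3.1 has been established, the two identities of Theorem 3.2 follow by pure bookkeeping. The one point worth articulating is \emph{why} the substitution $\beta = 0$ is what makes the cofactor term collapse to an induced-subgraph polynomial. For general $\beta$ the matrix $(xI_n - \beta D - \gamma A)^{s,t}_{s,t}$ still carries the original degrees of $G$ on its diagonal rather than the reduced degrees in $G - v_s - v_t$, so the identification would fail. This is precisely why Theorem 3.1 is stated with an explicit cofactor expression, and why the Laplacian and signless Laplacian analogues, where one effectively has $\beta = 1$, will take a somewhat different form in the subsequent sections.
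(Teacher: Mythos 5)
Your proposal is correct and follows exactly the paper's own route: specialize Theorem 3.1 at $\beta=0$, $\gamma=1$, and identify $\det[(xI_n-A)^{s,t}_{s,t}]$ and ${\rm per}[(xI_n-A)^{s,t}_{s,t}]$ with $\sigma_1(G-v_s-v_t;x)$ and $\sigma_4(G-v_s-v_t;x)$, the prefactors $\gamma^2-\beta^2=1$ and $-(\gamma^2+\beta^2)=-1$ giving the respective signs. Your added remark on why $\beta=0$ is essential for the cofactor to become an induced-subgraph polynomial is a correct and worthwhile elaboration of a point the paper leaves implicit, but the argument is the same.
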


\begin{proof}
It is not difficult to show that
\begin{align*}
&\det[(xI_n-A)^{i,j}_{i,j}]=\sigma_1(G-v_i-v_j;x),\\
&{\rm per}[(xI_n-A)^{i,j}_{i,j}]=\sigma_4(G-v_i-v_j;x).
\end{align*}
In Theorem 3.1, if we set $\beta=0$ and $\gamma=1$, then $\sigma_1(G;x)$ and $\sigma_4(G;x)$ satisfy Eqs. (3.15) and (3.16), respectively.
\end{proof}

If we set $\beta=1, \gamma=-1$ or $\beta=1, \gamma=1$ in Theorem 3.1, then $\gamma^2-\beta^2=0$. Hence we give a new proof of the following
result obtained in \cite{KM15}. In fact, they proved that the following Eq. (3.17) holds for any signed graph.
\begin{theorem}[\cite{KM15}]
Let $G$ be a simple graph with vertex set $V(G)=\{v_1,v_2,\ldots,v_n\}$ and edge set $E(G)=\{e_1,e_2,\ldots,e_m\}$. Then both $\sigma_2(G;x)$
and $\sigma_3(G;x)$ defined in Section 1 satisfy:
\begin{equation}
(m-n)\sigma_i(G;x)+x\sigma_i'(G;x)=\sum_{e\in E(G)}\sigma_i(G-e;x)
\end{equation}
for $i=2,3$.
\end{theorem}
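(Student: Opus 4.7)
The plan is to specialize the general identity (3.3) of Theorem 3.1 by choosing the parameters $\beta$ and $\gamma$ so that the coefficient $\gamma^2-\beta^2$ in front of the residual sum vanishes. For the Laplacian case $i=2$, I take $\beta=1$ and $\gamma=-1$; then $\beta D+\gamma A=D-A$ is the Laplacian matrix, so that
\[
\tau_1(G;x)=\det(xI_n-D+A)=\sigma_2(G;x).
\]
For the signless Laplacian case $i=3$, I take $\beta=1$ and $\gamma=1$; then $\beta D+\gamma A=D+A$ is the signless Laplacian matrix, so that $\tau_1(G;x)=\det(xI_n-D-A)=\sigma_3(G;x)$. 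In both specializations we have $\gamma^2-\beta^2=0$, which makes the term $(\gamma^2-\beta^2)\sum_{v_sv_t\in E(G)}\det[(xI_n-\beta D-\gamma A)^{s,t}_{s,t}]$ on the right-hand side of (3.3) vanish identically.

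With the residual sum gone, the identity (3.3) collapses to
\[
(m-n)\tau_1(G;x)+x\tau_1'(G;x)=\sum_{e\in E(G)}\tau_1(G-e;x),
\]
and it remains only to verify that $\tau_1(G-e;x)$ is precisely $\sigma_i(G-e;x)$ for the chosen parameters. This is an immediate bookkeeping check: if $e=v_sv_t$ is removed, then the degree matrix becomes $D_e=\mathrm{diag}(d_1,\ldots,d_s-1,\ldots,d_t-1,\ldots,d_n)$ and the adjacency matrix becomes $A_e$, so $\det(xI_n-\beta D_e-\gamma A_e)$ equals $\sigma_2(G-e;x)$ when $(\beta,\gamma)=(1,-1)$ and $\sigma_3(G-e;x)$ when $(\beta,\gamma)=(1,1)$, exactly as required.

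There is no real obstacle, since all of the analytical work has been carried out inside Theorem 3.1; the present statement is essentially the observation that the Laplacian and signless Laplacian lie on the locus $\gamma^2=\beta^2$ of the parameter space, along which the ``extra'' principal-minor term drops out. It is worth noting, by way of contrast, that the analogous specialization of the permanent identity (3.4) yields coefficient $\gamma^2+\beta^2=2\neq 0$, so no clean reduction occurs there; this is exactly why $\sigma_1$ and $\sigma_4$ require the separate treatment given in Theorem 3.2, while $\sigma_2$ and $\sigma_3$ admit the cleaner reconstruction identity stated here.
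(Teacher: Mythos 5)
Your proof is correct and is essentially identical to the paper's own argument: the paper likewise obtains Theorem 3.4 by setting $\beta=1,\ \gamma=\pm 1$ in Theorem 3.1 so that $\gamma^2-\beta^2=0$ kills the principal-minor term, with the same bookkeeping identification of $\tau_1(G-e;x)$ with $\sigma_i(G-e;x)$. No further comment is needed.
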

\iffalse
\begin{proof}
In Theorem 3.1, if we set $\beta=1, \gamma=-1$ or $\beta=1, \gamma=1$, then $\gamma^2-\beta^2=0$. Hence both $\sigma_2(G;x)$ and
$\sigma_3(G;x)$ satisfy Eq. (3.17).
\end{proof}
\fi

\iffalse
\begin{theorem}
Let $G$ be a simple graph with vertex set $V(G)=\{v_1,v_2,\ldots,v_n\}$ and edge set $E(G)=\{e_1,e_2,\ldots,e_m\}$. Then the graph polynomials
$f_5(G;x)$ and $f_6(G;x)$ defined in Section 1 satisfy:
\begin{equation}
(m-n)f_5(G;x)+xf_5'(G;x)=\sum_{e\in E(G)}f_5(G-e;x)-2\sum_{v_sv_t\in E(G)}{\rm per}\left[(xI_n-D-A)^{s,t}_{s,t}\right],
\end{equation}
\begin{equation}
(m-n)f_6(G;x)+xf_6'(G;x)=\sum_{e\in E(G)}f_6(G-e;x)-2\sum_{v_sv_t\in E(G)}{\rm per}\left[(xI_n-D+A)^{s,t}_{s,t}\right].
\end{equation}
\end{theorem}
\begin{proof}
In Theorem 3.1, if we set $\beta=1$ and $\gamma=-1$, then $f_5(G;x)$ satisfies Eq. (3.24); if we set $\beta=1$ and $\gamma=1$, then $f_6(G;x)$
satisfies Eq. (3.25).
\end{proof}
\fi

\section{The edge reconstruction of $\sigma_1(G;.x),\sigma_2(G;x),\sigma_3(G;x)$ and $\sigma_4(G;x)$}

Using the results in Section 3, in this section, we discuss two problems as follows: (1) For $i=1,4$, can the graph polynomial $\sigma_i(G;x)$
defined in Section 1 be determined by $\{\sigma_i(G-e;x)|e\in E(G)\}\cup \{\sigma_i(G-v_s-v_t;x)|v_sv_t\in E(G)\}$? (2) For $i=2,3$, can the
graph polynomial $\sigma_i(G;x)$ defined in Section 1 be determined by $\{\sigma_i(G-e;x)|e\in E(G)\}$?

\begin{theorem}
Let $G$ be a simple graph with vertex set $V(G)=\{v_1,v_2,\ldots,v_n\}$ and edge set $E(G)=\{e_1,e_2,\ldots,e_m\}$. If $m\neq n$, then
$\sigma_i(G;x)$ can be reconstructed from $\{\sigma_i(G-e;x)|e\in E(G)\}\cup \{\sigma_i(G-v_s-v_t;x)|v_sv_t\in E(G)\}$ for $i=1,4$.
\end{theorem}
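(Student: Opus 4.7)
The plan is to invoke the identity of Theorem~3.2 and solve the resulting linear ODE in the coefficients of $\sigma_i(G;x)$. From the given deck I would first read off the integers $m$ and $n$: $n$ is the common degree of any $\sigma_i(G-e;x)$ in the deck, while $m$ is the size of the edge-subdeck. I would then compute
\[
R(x) = \sum_{v_s v_t \in E(G)} \bigl[\sigma_i(G-v_sv_t;x) + \varepsilon_i\,\sigma_i(G-v_s-v_t;x)\bigr],
\]
where $\varepsilon_1=+1$ and $\varepsilon_4=-1$. By Theorem~3.2 the unknown $\sigma_i(G;x)$ must then satisfy the first-order linear ODE $(m-n)\sigma_i(G;x)+x\sigma_i'(G;x)=R(x)$, and the task is reduced to solving this ODE.

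Next I would compare coefficients. Writing $\sigma_i(G;x) = \sum_{k=0}^n c_k x^k$ and $R(x) = \sum_{k} r_k x^k$, the ODE becomes $(m-n+k)\,c_k = r_k$ for each $k \in \{0,\ldots,n\}$. Thus whenever $m - n + k \neq 0$, the coefficient $c_k$ is forced to equal $r_k/(m-n+k)$. The only potentially degenerate index is $k = n-m$, and the hypothesis $m \neq n$ keeps this index outside the range $[0,n]$ when $m > n$, which immediately determines every coefficient. The monic normalisation $c_n = 1$ is then automatic from $r_n = m$, since $r_n$ picks up only a $1$ from the leading term of each $\sigma_i(G-e;x)$ in the edge-subdeck.

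The main obstacle in the plan is the case $1 \leq m < n$, where the index $k = n-m$ lands inside $[1,n-1]$ and the ODE leaves $c_{n-m}$ free. To handle this, I would use individual deck polynomials rather than only their aggregate $R(x)$: applying the Sachs expansion to each $\sigma_i(G-e;x)$ and each $\sigma_i(G-v_s-v_t;x)$ gives a second linear combination of the Sachs sums relevant to $c_{n-m}$, which together with the ODE should pin down the missing coefficient. The delicate point is that the most natural such combination (for instance the difference $\sum_e \sigma_i(G-e;x) - \sum_{v_sv_t}\sigma_i(G-v_s-v_t;x)$) again collapses at the critical index to a $K_2$-component-weighted sum of Sachs terms that does not cleanly separate from $c_{n-m}$ itself, so a more careful structural argument is needed to close the proof in this regime. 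This is the step I expect to require most of the work.
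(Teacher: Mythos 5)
Your plan follows the paper's route exactly: invoke the identity of Theorem~3.2 (Eqs.~(3.15)--(3.16)) and treat $(m-n)\sigma_i(G;x)+x\sigma_i'(G;x)=R(x)$ as a linear system in the coefficients of $\sigma_i(G;x)$. Your coefficient analysis is correct: $(m-n+k)c_k=r_k$, every $c_k$ with $m-n+k\neq 0$ is determined, and the only problematic index is $k=n-m$. For $m>n$ that index is negative and the proof is complete, exactly as you say. But your proposal openly leaves the case $1\leq m<n$ unresolved, and the sketch you offer for closing it (extracting a second relation among the Sachs sums at the critical index from the individual deck polynomials) is neither carried out nor, as you yourself concede, obviously capable of separating $c_{n-m}$ from quantities computable from the deck. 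As submitted, this is a genuine gap: the theorem is asserted for all $m\neq n$, and your argument establishes it only for $m>n$ (plus trivial cases such as $m=1$, where $c_{n-1}=-\operatorname{tr}(A)=0$).

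You should know, however, that the paper's own proof is no better on exactly this point. The paper evaluates the identity at $x=0$ to recover $\sigma_i(G;0)=\frac{1}{m-n}[\cdots]$ and then asserts that ``the differential equation has a unique solution.'' That assertion is valid only when $m>n$: the homogeneous equation $(m-n)y+xy'=0$ has the polynomial solution $y=Cx^{\,n-m}$, which for $m<n$ has positive degree and vanishes at $x=0$, so prescribing $\sigma_i(G;0)$ does not exclude it --- this is precisely the free coefficient $c_{n-m}$ you identified. In other words, you have correctly diagnosed the weak point, but you will not find the missing step in the paper; the aggregate identity together with the value at $0$ simply cannot supply $c_{n-m}$ when $m<n$, so either a genuinely new input is required there or the result should be regarded as proved only for $m>n$.
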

\begin{proof}
Note that, by Theorems 3.3, $\sigma_1(G;x)$ and $\sigma_4(G;x)$ satisfy the differential equations Eqs. (3.15) and (3.16), respectively. If
$m\neq n$, then
\begin{equation}
\sigma_1(G;0)=\frac{1}{m-n}\left[\sum_{e\in E(G)}\sigma_1(G-e;0)+\sum_{v_sv_t\in E(G)}\sigma_1(G-v_s-v_t;0)\right],
\end{equation}
\begin{equation}
\sigma_4(G;0)=\frac{1}{m-n}\left[\sum_{e\in E(G)}\sigma_4(G-e;0)-\sum_{v_sv_t\in E(G)}\sigma_4(G-v_s-v_t;0)\right].
\end{equation}
Hence both of the differential equations above have a unique solution, and the theorem holds.
\end{proof}

Kiani and Mirzakhah \cite{KM15} used a different method to prove the following theorem.

\begin{theorem}[\cite{KM15}]
Let $G$ be a simple graph with vertex set $V(G)=\{v_1,v_2,\ldots,v_n\}$ and edge set $E(G)=\{e_1,e_2,\ldots,e_m\}$. If $m\neq n$, then
$\sigma_i(G;x)$ can be reconstructed from $\{\sigma_i(G-e;x)|e\in E(G)\}$ for $i=2,3$.
\end{theorem}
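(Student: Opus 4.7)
The plan is to carry out the same strategy as in the proof of Theorem 4.1, with the identity of Theorem 3.4 replacing that of Theorem 3.3. First I would observe that from the edge deck $\{\sigma_i(G-e;x) : e \in E(G)\}$, the integer $m$ (its cardinality) and $n$ (the common degree of its members) are immediate, and the polynomial $F_i(x) := \sum_{e \in E(G)} \sigma_i(G-e;x)$ is obtained by direct summation.

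By Theorem 3.4, for $i \in \{2,3\}$, the polynomial $\sigma_i(G;x)$ satisfies
\[
(m-n)\,\sigma_i(G;x) + x\,\sigma_i'(G;x) \;=\; F_i(x).
\]
Evaluating at $x = 0$ and using $m \neq n$ yields, in complete analogy with Eqs.~(4.1)--(4.2), the constant term
\[
\sigma_i(G;0) \;=\; \frac{1}{m-n}\sum_{e \in E(G)} \sigma_i(G-e;0).
\]
Writing $\sigma_i(G;x) = \sum_{k=0}^n a_k x^k$ and matching the coefficient of $x^k$ gives $(m-n+k)\,a_k = [x^k] F_i(x)$ for each $k$. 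Combined with the prescribed leading coefficient $a_n = 1$, this linear system pins down each $a_k$ and hence $\sigma_i(G;x)$ itself.

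The step I expect to be the main obstacle is the coefficient matching when the factor $m-n+k$ vanishes: the hypothesis $m\ne n$ rules out $k=0$ outright, and for $m>n$ all factors $m-n+k$ are positive so every $a_k$ is determined immediately. The subtle case is $m<n$, in which the index $k=n-m$ produces a trivial coefficient equation and leaves $a_{n-m}$ unconstrained by the ODE alone. Recovering this last coefficient is where supplementary structural information is required---for instance, interpreting $a_{n-m}$ as $(-1)^m$ times the sum of $m\times m$ principal minors of $L=D-A$ or $Q=D+A$ and extracting it via a matrix-tree / spanning-forest count that is already visible in the edge deck---and is precisely the point where the present route through Theorem 3.4 needs to be supplemented, matching the role of the distinct argument used by Kiani and Mirzakhah \cite{KM15}.
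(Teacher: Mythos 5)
Your proposal follows exactly the paper's own route for this theorem: read $m$ and $n$ off the deck, form $F_i(x)=\sum_{e\in E(G)}\sigma_i(G-e;x)$, and solve the first-order identity of Theorem 3.4 for $\sigma_i(G;x)$. Your coefficient analysis is correct: writing $\sigma_i(G;x)=\sum_{k}a_kx^k$, the identity is equivalent to $(m-n+k)a_k=[x^k]F_i(x)$ for all $k$, so every coefficient is determined when $m>n$, while for $m<n$ the single coefficient $a_{n-m}$ drops out of the system.

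The obstacle you flag at $k=n-m$ is genuine, and it is worth saying plainly that the paper's own proof does not address it either. The paper computes $\sigma_i(G;0)$ and then asserts that ``this differential equation has a unique solution''; that is correct for $m>n$, but for $m<n$ the homogeneous equation $(m-n)y+xy'=0$ has the polynomial solution $Cx^{n-m}$, which vanishes at $x=0$, so prescribing the value at the origin does not restore uniqueness, and $a_{n-m}=(-1)^{m}e_m(D\mp A)$ (a signed sum of $m\times m$ principal minors) is left undetermined by the identity. This coefficient is not automatically zero: for a tree one has $m=n-1$ and, for the Laplacian, $a_1=(-1)^{n-1}n$ by the matrix-tree theorem. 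Hence both your argument and the paper's establish the statement only for $m>n$; the case $m<n$ really does require the supplementary structural input you allude to (for $i=2$, $e_m(D-A)$ vanishes unless $G$ is a forest, in which case it equals the product of the component orders, and one must then argue that this information is recoverable from the edge deck; for $i=3$ the relevant quantity is governed by bipartite components and the analysis is different again). Since you do not carry out this repair, your proposal is an incomplete proof of the stated theorem --- but your diagnosis is accurate and exposes a real lacuna in the proof of Theorem 4.2 as written (and, by the same mechanism, in that of Theorem 4.1); the statement itself is safe only because it is imported from Kiani--Mirzakhah, who prove it by other means.
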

\begin{proof}
Note that, for $i=2,3$, by Theorems 3.4, $\sigma_i(G;x)$ satisfies the differential equations Eq. (3.17). If $m\neq n$, then
$\sigma_i(G;0)=\frac{1}{m-n}\sum\limits_{e\in E(G)}\sigma_i(G-e;0)$.
Hence this differential equation has a unique solution, and the theorem holds.
\end{proof}

Note that, $\sigma_2(G;x)=\det(xI-D+A)$. Hence $\sigma_2(G;0)=\det(A-D)=0$. If $m=n$, then by Theorem 3.4,
$$x\sigma_2'(G;x)=\sum_{e\in E(G)}\sigma_2(G-e;x).$$
Given the initial condition $\sigma_2(G;0)=0$, the differential equation above has a unique solution. So, combining with Theorem 4.2, the
following theorem holds.

\begin{theorem}
The Laplacian  characteristic polynomial $\sigma_2(G;x)=\det(xI-D+A)$ of a simple graph $G$ with edge set $E(G)$ can be reconstructed from
$\{\sigma_2(G-e;x)|e\in E(G)\}$.
\end{theorem}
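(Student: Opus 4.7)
The strategy is to split into two subcases according to whether $m\neq n$ or $m=n$, using Theorem~4.2 to handle the former and a direct differential-equation argument for the latter, and to glue the two via the identity provided by Theorem~3.4.

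If $m\neq n$, then Theorem~4.2 specialized to $i=2$ already exhibits $\sigma_2(G;x)$ as an explicit expression in the members of $\{\sigma_2(G-e;x)\mid e\in E(G)\}$, so there is nothing new to prove. It therefore suffices to treat the remaining case $m=n$.

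When $m=n$, the coefficient $m-n$ in the identity of Theorem~3.4 vanishes, leaving the simpler relation
\[
x\sigma_2'(G;x)=\sum_{e\in E(G)}\sigma_2(G-e;x),
\]
whose right-hand side is a known polynomial $h(x)$ determined by the edge-deck. The crucial auxiliary input is the initial condition at $0$: since the all-ones vector is in the kernel of the Laplacian $L=D-A$, one has $\sigma_2(G;0)=\det(A-D)=(-1)^n\det L=0$, and the same reasoning gives $\sigma_2(G-e;0)=0$ for every $e\in E(G)$, so $h(0)=0$ as well. Writing $\sigma_2(G;x)=\sum_{k\geq 0}a_k x^k$ and $h(x)=\sum_{k\geq 1}b_k x^k$, the functional equation $xy'(x)=h(x)$ reduces to $k a_k=b_k$ for every $k\geq 1$, pinning down all $a_k$ with $k\geq 1$, while the initial condition $a_0=\sigma_2(G;0)=0$ fixes the constant term. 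Consequently every coefficient of $\sigma_2(G;x)$ is determined by the edge-deck, yielding the reconstruction.

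The essentially only subtlety is supplying the initial value $\sigma_2(G;0)=0$ that rescues the case $m=n$ where the naive division by $m-n$ breaks down; once that is observed, the coefficient-matching step is routine. I therefore do not foresee a genuine obstacle, since the heavy lifting is carried out by the identity in Theorem~3.4 together with the singularity of the Laplacian, and Theorem~4.2 covers the case $m\neq n$ outright.
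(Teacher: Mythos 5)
Your proposal is correct and follows essentially the same route as the paper: for $m\neq n$ apply Theorem~4.2, and for $m=n$ use the degenerate form $x\sigma_2'(G;x)=\sum_{e\in E(G)}\sigma_2(G-e;x)$ of Theorem~3.4 together with the initial condition $\sigma_2(G;0)=\det(A-D)=0$ coming from the singularity of the Laplacian. Your explicit coefficient-matching ($ka_k=b_k$) and the observation that $h(0)=0$ merely spell out the uniqueness of the solution that the paper asserts.
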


\section{Discussions}
In this paper, we solve mainly the following two problems: (1) Reconstruct the characteristic polynomial $\sigma_1(G;x)$ (the permanental
polynomial $\sigma_4(G;x)$) of a simple graph $G$ from the collection of characteristic polynomials (permanental polynomials) of the
edge-vertex deck of $G$ if $|V(G)|\neq |E(G)|$; (2) Reconstruct the Laplacian characteristic polynomial $\sigma_2(G;x)$ (the signless Laplacian
characteristic polynomial $\sigma_3(G;x)$) of a simple graph $G$ from the collection of Laplacian characteristic polynomials (signless
Laplacian characteristic polynomials) of the edge deck of $G$ (if $|V(G)|\neq |E(G)|$).

The following questions are interesting, which are still open.

1. Can the characteristic polynomial $\sigma_1(G;x)$ of a graph $G$ with $n$ vertices and $m$ edges be reconstructed from
$\{\sigma_1(G-e;x)|e\in E(G)\}$, or $\{\sigma_1(G-v_sv_t;x), \sigma_1(G-v_s-v_t)|v_sv_t\in
E(G)\}$ if $m=n$?

2. Can the permanental polynomial $\sigma_4(G;x)$ of a graph $G$ with $n$ vertices and $m$ edges be reconstructed from
$\{\sigma_4(G-e;x)|e\in E(G)\}$, or $\{\sigma_4(G-v_sv_t;x), \sigma_4(G-v_s-v_t)|v_sv_t\in
E(G)\}$ if $m=n$?

3. Can the sigless Laplacian characteristic polynomial $\sigma_3(G;x)$ be reconstructed from $\{\sigma_3(G-e;x)|e\in E(G)\}$ if $m=n$?

\iffalse
We find that it is very different from the reconstruction conjecture, the edge reconstruction conjecture and the reconstruction conjecture on
the characteristic polynomial of a graph, which are still open. if $m\neq n$, then graph polynomials $f_1(G;x)$ and $f_4(G;x)$ of a simple
graph $G=(V(G),E(G))$ can be reconstructed from $\{f_1(G-e;x)|e\in E(G)\}\bigcup\{f_1(G-v_s-v_t;x)|v_sv_t\in E(G)\}$ and $\{f_4(G-e;x)|e\in
E(G)\}\bigcup\{f_4(G-v_s-v_t;x)|v_sv_t\in E(G)\}$, respectively; graph polynomials $f_2(G;x)$ and $f_3(G;x)$ of a simple graph $G=(V(G),E(G))$
can be reconstructed from $\{f_2(G-e;x)|e\in E(G)\}$ and $\{f_3(G-e;x)|e\in E(G)\}$, respectively. We also show that if $m=n$, then $f_2(G;x)$
can be reconstructed from $\{f_2(G-e;x)|e\in E(G)\}$. Naturally, we have two question: (1) Characterize the simple graphs $G$ such that
$f_1(G;x)$ and $f_4(G;x)$ can reconstructed from $\{f_1(G-e;x)|e\in E(G)\}\bigcup\{f_1(G-v_s-v_t;x)|v_sv_t\in E(G)\}$ and $\{f_4(G-e;x)|e\in
E(G)\}\bigcup\{f_4(G-v_s-v_t;x)|v_sv_t\in E(G)\}$ if $m=n$, respectively; (2) Characterize the simple graphs $G$ such that $f_3(G;x)$ can
reconstructed from $\{f_3(G-e;x)|e\in E(G)\}$ if $m=n$.
\fi
%%%%%%%%%%%%%%%%%%%%%%%%%%%%%%%%%%%%%%%%%%%

\end{document}